\theoremstyle{plain}
        \newtheorem{theorem}{Theorem}
        \newtheorem*{theorem*}{Theorem}
        \newtheorem*{conj*}{Conjecture}
        \newtheorem*{FGAconj*}{Finite Global Curve Attractor Conjecture}
        \newtheorem{lemma}[theorem]{Lemma}
        \newtheorem{cor}[theorem]{Corollary}
        \newtheorem{prop*}{Proposition}
\theoremstyle{definition}
\theoremstyle{remark}
        \newtheorem*{remark}{Remark}
        \newtheorem*{remarks}{Remarks}
        \newtheorem*{claim}{Claim}
\numberwithin{equation}{section}
\newcommand{\id} {\operatorname{id}}
\renewcommand{\Im} {\operatorname{Im}}
\newcommand{\Qh}{\widehat{\mathbb{Q}}}
\newcommand{\R}{\mathbb{R}}      
\newcommand{\C}{\mathbb{C}}      
\newcommand{\N}{\mathbb{N}}      
\newcommand{\Q}{\mathbb{Q}}      
\newcommand{\Halb}{\mathbb{H}}   
\newcommand{\CDach}{\widehat{\mathbb{C}}}
\newcommand{\D}{\mathbb{D}}      
\renewcommand{\:}{\colon}   
\newcommand{\ra}{\rightarrow} 
\newcommand{\sub}{\subset}
\newcommand{\Sp}{S^2}
\newcommand{\inter}{\operatorname{int}}
\newcommand{\trivial}{\odot}
\newcommand{\Orb}{\mathcal{O}}
\newcommand{\X} {\mathcal{X}}
\newcommand{\W} {\mathcal{W}}
\newcommand{\NC} {\mathcal{N}}
\newcommand{\T}{\mathcal{T}}
\newcommand{\M}{\mathcal{M}}
\begin{document}

\title[Thurston's pullback map and invariant covers] {Thurston's pullback map, invariant covers, and the global dynamics on curves}
\author{Mario Bonk}
\address{Department of Mathematics, University of California, Los Angeles, CA 90095, USA}
\email{mbonk@math.ucla.edu}
\author{Mikhail Hlushchanka}
\address{Korteweg-de Vries Instituut voor Wiskunde, Universiteit van Amsterdam,  1090 GE \newline Amsterdam, The Netherlands}
\email{mikhail.hlushchanka@gmail.com}
\author{Russell Lodge}
\address{Department of Mathematical Sciences, Indiana State University, Terre Haute, IN 47809, USA}
\email{russell.lodge@indstate.edu}
\date{\today}

\keywords{Thurston maps, Thurston's pullback map, moduli space correspondence, curve attractor, invariant graphs, tessellations.}
\subjclass[2010]{Primary 37F20, 37F10} 

\thanks{The first author was  partially supported by NSF grant DMS-1808856. 
 The second author was  partially supported by NSF grant DMS-1440140, while the authors
  participated in a program hosted by the Mathematical Sciences
  Research Institute in Berkeley, California, during the Spring semester of 2022. The second author was also partially supported by the Marie Sk\l{}odowska-Curie Postdoctoral Fellowship under Grant No.\ 101068362. }

 \begin{abstract} 
 We consider rational maps $f$ on the Riemann sphere $\CDach$ with an $f$-invariant set $P\sub \CDach$ of four marked points containing the postcritical set of $f$.
 We show that the dynamics of the corresponding Thurston pullback map $\sigma_f$ on the
 completion $\overline{\T_P}$ of the associated Teichm\"uller space $\T_P$ with respect to the Weil--Petersson metric 
is easy to understand when $\overline{\T_P}$ admits a cover by sets  with good combinatorial and dynamical properties. In particular, the map $f$ has a finite global curve attractor in this case. Using a result by 
Eremenko and Gabrielov, we also show that if $P$ contains all critical points of $f$ and each point in $P$ is periodic, then such a cover of $\overline{\T_P}$ can be obtained 
from  a $\sigma_f$-invariant tessellation by ideal hyperbolic triangles.  

   \end{abstract}

\maketitle
\section{Introduction}
In this note we consider (orientation-preserving) branched covering maps 
$f\:S^2\ra S^2$  on a topological $2$-sphere $\Sp$. 
 We use $f^n$, $n\in \N$, to denote the $n$-th iterate of $f$ and $C_f$ to denote the set of critical points of $f$.  Then $f$ is called \emph{postcritically-finite} if its postcritical set 
 \[ P_f :=\bigcup_{n=1}^\infty f^{n}(C_f)
 \]
  is finite. A (\emph{marked}) \emph{Thurston map} is a branched covering map $f\colon (\Sp, P)\to (\Sp, P)$, where $P\subset \Sp$ is a finite set of marked points satisfying $P_f\subset P$ and $f(P)\subset P$; in particular, every Thurston map is postcritically-finite. We assume throughout that $|P|\geq 4$ and will be mainly interested in the case when $f$ is a rational map on the Riemann sphere $\CDach:=\C\cup\{\infty\}$ with $|P|=4$.

We denote by $\mathscr{C}_P$ the set of all isotopy classes $[\gamma]$ of essential (non-oriented) simple closed curves $\gamma$ in $\Sp\setminus P$. We set  $\overline{\mathscr{C}_P}:=\mathscr{C}_P\cup \{\trivial\}$, where $\trivial$ represents the isotopy classes of all non-essential curves in $\Sp\setminus P$.

The map $f$ induces a \emph{pullback relation} $\xleftarrow{f}$ on the set $\overline{\mathscr{C}_P}$: given $[\gamma]\in \mathscr{C}_P$ and a component $\delta$ of $f^{-1}(\gamma)$, we declare $[\gamma]\xleftarrow{f}[\delta]$ if $\delta$ is essential  and $[\gamma]\xleftarrow{f}\trivial$ if $\delta$ is non-essential; in addition, we also set $\trivial \xleftarrow{f} \trivial$. When $|P|=4$, all essential components of $f^{-1}(\gamma)$ are isotopic to each other, and so 
the relation $\xleftarrow{f}$ determines  a well-defined  \emph{pullback map} $\mu_f\: \overline{\mathscr{C}_P}\ra \overline{\mathscr{C}_P}$. Namely, we set $\mu_f([\gamma])=[\delta]$ if $f^{-1}(\gamma)$ has some essential component $\delta$ and $\mu_f([\gamma])= \trivial$, otherwise.

%
%

One motivation for the investigations in this paper is   the following conjecture.

\begin{FGAconj*} If $f\colon (\CDach, P)\to (\CDach, P)$ is a rational Thurston map with a hyperbolic orbifold, then the pullback relation $\xleftarrow{f}$  on curves has a finite global attractor, that is, there is a finite set $\mathcal{A}\subset \overline{\mathscr{C}_P}$ such that every orbit $[\gamma_0]\xleftarrow{f}[\gamma_1]\xleftarrow{f}[\gamma_2]\xleftarrow{f}\dots$ eventually lies in $\mathcal{A}$.
\end{FGAconj*}

The minimal set $\mathcal{A}\subset \overline{\mathscr{C}_P}$ satisfying the conjecture above is called the \emph{global attractor} of $\xleftarrow{f}$.

The Finite Global Curve Attractor Conjecture has recently been  confirmed for all postcritically-finite polynomial maps \cite{belk2022recognizing}. However, in the setting of non-polynomial rational maps only partial results are available \cite{pilgrim2012algebraic,lodge2012boundary,hlushchanka2019tischler,kelsey2019quadratic,bonk2021eliminating,smith2024curve}. By a very recent result of Bartholdi, Dudko, and Pilgrim it is now known when $|P|=4$ \cite{bartholdi2024correspondences}.  Their proof is by contradiction and gives no explicit way of finding the global attractor. Therefore, it remains valuable to explore alternative approaches that could lead to an identification of the attractor. The purpose of this paper is to provide some results in this direction.

The pullback relation on curves for a Thurston map $f\colon (\Sp, P)\to (\Sp, P)$ 
is closely related to the \emph{Thurston pullback map} $\sigma_f\colon \T_P\to \T_P$ on the Teichm\"uller space associated with $f$ \cite{koch2016pullback}. In fact, we will use this relation to establish  a sufficient condition for the existence of a finite global attractor of $\xleftarrow{f}$ in the  special case when $|P|=4$. To formulate our result,  we 
first fix some terminology and notation. 

When $|P|=4$, we may identify the Teichm\"uller space $\T_P$ with the upper half-plane $\Halb:=\{z\in \C: \Im(z)>0\}$
and the Weil--Petersson completion $\overline{\T_P}$ with $\Halb^*:=\Halb \cup \Qh$, where $\Qh:=\Q\cup \{\infty\}$. By  work of Selinger \cite{selinger2012thurston}, the Thurston pullback map $\sigma_f$ admits a continuous extension to the 
Weil--Petersson completion $\overline{\T_P}$, which we also denote by $\sigma_f$ for simplicity. We can now formulate our first result.

\begin{theorem}\label{thm-intro:FGA-tiling}
Let $f\colon (\CDach, P)\to (\CDach, P)$ be a rational Thurston map with $|P|=4$ and a  hyperbolic orbifold. Suppose that there is a cover $\mathcal{U}$ of the Weil--Petersson completion $\overline{\T_P}=\Halb^*$ by some of its subsets  such that the following conditions are true:
\begin{enumerate}[label=\normalfont{(\roman*)}]

\smallskip
 \item\label{item:i} every point in  $\T_P$ has a neighborhood that intersects only finitely many sets in $\mathcal{U}$,

\smallskip 
    \item\label{item:ia} every set in $\mathcal{U}$  contains at least one point in 
    $\T_P=\Halb$  and    at most finitely many points in $\partial \T_P = \widehat{\Q}$, 
   
  \smallskip
 \item\label{item:ii} for every set $T\in  \mathcal{U}$ there exists $T'\in \mathcal{U}$ such 
    such that $\sigma_f(T)\sub T'$.
\end{enumerate}
 Then the pullback relation $\xleftarrow{f}$ on curves has a finite global attractor.  
\end{theorem}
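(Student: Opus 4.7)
The plan is to use the cover $\mathcal{U}$ to shadow any symbolic orbit on the boundary $\partial\T_P$ by an actual $\sigma_f$-orbit in the interior $\T_P$, where the dynamics is tame, and then pigeonhole using conditions (i) and (ii). Under the standard identification $\T_P\cong\Halb$ with $\partial\T_P=\Qh$, the elements of $\Qh$ parametrize $\mathscr{C}_P$ by rational slopes on the four-marked sphere, and the cited Selinger extension preserves $\Qh$ and realises $\mu_f$ on $\mathscr{C}_P$ through its restriction $\sigma_f|_{\Qh}$ (if a curve's essential pullback disappears, the $\mu_f$-orbit becomes stationary at $\odot$, so I will simply include $\odot$ in the attractor). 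Given $x_0\in\Qh$, I pick $T_0\in\mathcal{U}$ with $x_0\in T_0$ from the covering property and, by (ii), fix some $z_0\in T_0\cap\Halb$. Iterating (iii) produces a sequence $T_n\in\mathcal{U}$ with $\sigma_f(T_{n-1})\sub T_n$, and then both $\sigma_f^n(x_0)$ and $z_n:=\sigma_f^n(z_0)$ lie in $T_n$ for every $n\geq 0$.

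The dynamical core of the argument is the classical fact that for a rational Thurston map with hyperbolic orbifold, $\sigma_f$ has a unique fixed point $z^*\in\T_P$ (Thurston rigidity) and every forward orbit in $\T_P$ converges to $z^*$. One quick way to see the convergence is that $\sigma_f\colon\Halb\to\Halb$ is a holomorphic self-map that is not an automorphism, hence strictly contracts the Poincar\'e distance by Schwarz--Pick, so the monotone sequence $d(\sigma_f^n(z_0),z^*)$ must tend to $0$ (Wolff--Denjoy, or the Douady--Hubbard contraction picture). In particular, $z_n\to z^*$.

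Now condition (i) supplies a neighborhood $V$ of $z^*$ meeting only finitely many sets of $\mathcal{U}$, say $T^{(1)},\dots,T^{(k)}$. Since $z_n\in T_n$ and $z_n\to z^*$, for all sufficiently large $n$ the set $T_n$ meets $V$ and therefore equals some $T^{(j)}$; consequently $\sigma_f^n(x_0)\in T_n\cap\Qh\sub\bigcup_{j=1}^{k}(T^{(j)}\cap\Qh)=:\mathcal{A}'$. By (ii), $\mathcal{A}'$ is finite, and crucially it depends only on the data $(\mathcal{U},z^*)$, not on the starting point $x_0$. Hence $\mathcal{A}:=\mathcal{A}'\cup\{\odot\}$ is a finite global attractor for $\xleftarrow{f}$.

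The main obstacle is isolating the convergence $z_n\to z^*$, which is the genuine dynamical input and the reason the hyperbolic-orbifold hypothesis is essential; everything else is a clean bookkeeping argument that uses each of the three cover conditions exactly once. A secondary care point, handled above by including $\odot$ in the attractor, is the precise relationship between $\sigma_f|_{\Qh}$ and $\mu_f$ at curves whose essential pullbacks degenerate.
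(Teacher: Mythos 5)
Your proposal is correct and follows essentially the same route as the paper's proof: shadow the boundary orbit by an interior orbit lying in the same nested sequence of cover sets, use convergence of interior orbits to the unique fixed point $\tau_f$ (a standard fact the paper also just cites), localize with condition (i), and conclude finiteness with condition (ii), handling degenerate pullbacks via $\trivial$. The only cosmetic difference is that you sketch the Schwarz--Pick/contraction argument for convergence to the fixed point, which the paper takes as known.
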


If a  cover $\mathcal{U}$ of  $\overline{\T_P}=\Halb^*$  has properties \ref{item:i}--\ref{item:ii}, then we say that it is {\em $\sigma_f$-invariant}.  Once such a cover exists, one can establish the existence of a finite global attractor of $\xleftarrow{f}$ by a very short argument and  easily exhibit a finite set that contains the global attractor (see the remarks after the proof of Theorem~\ref{thm-intro:FGA-tiling} in Section~\ref{sec:FGA-invariant-tiling}; there we also discuss how conditions~\ref{item:i} and \ref{item:ia}
can be relaxed). Since the dynamics of the Thurston pullback map is quite intricate in general, the existence of a $\sigma_f$-invariant  cover   is an  interesting result on its own.

Our argument for the existence of a finite global attractor in Theorem~\ref{thm-intro:FGA-tiling} relies on the pointwise convergence of the iterates  of $\sigma_f$ to the unique fixed point in the Teichm\"uller space $\T_P$. In this way, it differs conceptually  from previous results relying on algebraic  tools \cite{pilgrim2012algebraic, lodge2012boundary,kelsey2019quadratic} or combinatorial arguments \cite{hlushchanka2019tischler, belk2022recognizing, bonk2021eliminating} or an asymptotic analysis \cite{bartholdi2024correspondences}.


We do not know whether  Theorem~\ref{thm-intro:FGA-tiling} is applicable to all rational Thurston maps with four marked points and a hyperbolic orbifold. However, it does apply to some natural infinite families of such maps, such as rational Thurston maps with all critical points in the marked set and rational Thurston maps with a moduli space map (Corollary~\ref{cor:injective-X}). More specifically, we use a result of Eremenko and Gabrielov on rational functions with real critical points \cite{eremenko1999} to prove the existence of a $\sigma_f$-invariant cover for the former family.


%
%


\begin{theorem}\label{thm:inv-tiling-crit-points}
    Let $f\colon (\CDach, P)\to (\CDach, P)$ be a rational Thurston map with $|P|=4$ 
    and a hyperbolic orbifold. Suppose  that $C_f\subset P$ and that every point in $P$ is periodic. Then there exists a $\sigma_f$-invariant cover $\mathcal{U}$ of 
    $\overline{\T_P}$. 
  \end{theorem}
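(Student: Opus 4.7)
The plan is to take $\mathcal{U}$ to consist of the closed ideal hyperbolic triangles of the Farey tessellation of $\overline{\T_P} = \Halb^*$, under a carefully chosen identification of $\T_P$ with $\Halb$. With this choice, axioms \ref{item:i} and \ref{item:ia} will be immediate from the geometry of the tessellation, and the substance of the argument will lie in verifying the invariance axiom \ref{item:ii}, which is to follow from the Eremenko--Gabrielov theorem together with the periodicity hypothesis.

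First, I would fix identifications $\T_P \cong \Halb$ and $M_{0,4} \cong \CDach \setminus \{0,1,\infty\}$ so that the universal covering $\pi\colon \T_P \to M_{0,4}$ is the classical modular $\lambda$-function. The preimage $\pi^{-1}(\RDach \setminus \{0,1,\infty\})$ is then precisely the 1-skeleton $\mathscr{E}$ of the Farey tessellation, whose complementary components are the open ideal triangles with vertices in $\Qh$. A point $\tau \in \T_P$ lies in $\mathscr{E}$ if and only if its four marked points on $\CDach$ are concyclic. Since each closed Farey triangle has nonempty interior in $\Halb$ and exactly three cusps in $\Qh$, and each point of $\Halb$ lies in at most two such triangles, axioms \ref{item:i} and \ref{item:ia} will be satisfied.

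Next, to verify \ref{item:ii}, I would show that $\sigma_f^{-1}(\mathscr{E}) \subset \mathscr{E}$. Given $\tau$ with $\sigma_f(\tau) \in \mathscr{E}$, the Thurston pullback at $\tau$ is realized by a holomorphic map $F\colon \CDach \to \CDach$ sending the marked configuration $P''$ of $\sigma_f(\tau)$ onto the marked configuration $P'$ of $\tau$. The hypothesis that every point of $P$ is periodic gives $f(P) = P$, and hence $F(P'') = P'$; the hypothesis $C_f \subset P$ gives $C_F \subset P''$. Since $P''$ lies on a circle $C$, all critical points of $F$ lie on $C$, and the Eremenko--Gabrielov theorem \cite{eremenko1999} will produce a Möbius transformation $L$ with $L \circ F(C) \subset \RDach$. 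Equivalently, $F(C)$ lies in the circle $C' = L^{-1}(\RDach)$, so $P' = F(P'') \subset C'$ is concyclic and $\tau \in \mathscr{E}$. It would then follow that $\sigma_f$ carries each open Farey triangle into a single open triangle (being, by the open mapping theorem, a connected open set disjoint from $\mathscr{E}$), and using the continuous extension of $\sigma_f$ to $\overline{\T_P}$ from \cite{selinger2012thurston} together with the fact that $\sigma_f$ sends $\Qh$ into $\Qh$, one obtains $\sigma_f(T) \subset T'$ for the corresponding closed triangles.

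The principal obstacle will be the clean application of Eremenko--Gabrielov to $F$ with critical points on an arbitrary round circle and with multiplicities summing to $2\deg(F) - 2$; the first issue is handled by a Möbius change of coordinates, but the second requires verifying that the theorem carries through for rational maps of arbitrary degree with the appropriate count of critical points. A secondary concern is ensuring that the identification $\T_P \cong \Halb$ in the first step is chosen so that the cusps of the Farey tessellation correspond correctly to the boundary points of $\overline{\T_P}$ indexing essential curve classes, so that the restriction of $\sigma_f$ to cusps matches the combinatorial pullback map $\mu_f$.
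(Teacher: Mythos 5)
Your proposal is correct and follows essentially the same route as the paper: the cover by ideal triangles obtained as components of the complement of $\pi^{-1}(G)$, where $G$ is the circle through the three punctures of moduli space (your Farey tessellation), is exactly the paper's cover, and the invariance is proved by the same application of Eremenko--Gabrielov to the normalized rational map realizing the pullback, using $f(P)=P$ and $C_f\subset P$ in the same way; the only cosmetic difference is that the paper routes the lifting step through the Hurwitz correspondence $X,Y$ and a general lemma on correspondence-invariant graphs, while you argue directly with $\sigma_f^{-1}(\mathscr{E})\subset\mathscr{E}$ on Teichm\"uller space. (One small remark: the passage from open to closed triangles needs only the continuity of the extended $\sigma_f$, not the claim that $\sigma_f$ maps $\Qh$ into $\Qh$, which is false in general since boundary points may be sent into $\T_P$.)
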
  
    
 As we will see, the cover $\mathcal{U}$ can be described explicitly here. 
 Namely,  let  $\pi\colon \T_P\to \M_P$ denote the canonical universal covering map
 of the moduli space $\M_P$. There is a natural identification $\M_P$ with a thrice-punctured sphere. If 
 $G$ is   the circle in $\overline{\M_P}\cong \CDach$ passing through the punctures of $\M_P$, then the closures of the complementary components of $\pi^{-1}(G)$ in 
 $\overline{\T_P}=\Halb^*$ are ideal hyperbolic triangles and form the sets of a $\sigma_f$-invariant cover $\mathcal{U}$.

Our paper is structured as follows. First, we introduce the relevant concepts and notation in Section~\ref{sec:pullback-map}. Next, we provide the  (short) proof of Theorem~\ref{thm-intro:FGA-tiling} in Section~\ref{sec:FGA-invariant-tiling}. Finally, Section~\ref{sec:examples-and-applications} is devoted to discussions of various applications of this theorem. Here, Lemma~\ref{lem:inv_tiling} provides a key sufficient condition for the existence of a $\sigma_f$-invariant cover of the Weil--Petersson completion of  Teichm\"uller space. We use this lemma to derive the existence of $\sigma_f$-invariant covers for special families of rational maps in Sections~\ref{subsec:four-periodic-crit} and \ref{subsec:injective-X}. In Section~\ref{subsec:examples}, we give some explicit examples of rational maps to which our criterion applies. 

\subsection*{Acknowledgments} The authors would like to thank Dima Dudko, Daniel
Meyer, Kevin Pilgrim, Nikolai Prochorov, and Zachary Smith for various valuable discussion and comments.

\section{Thurston's pullback map}\label{sec:pullback-map}
We refer the reader to \cite{Douady1993,Buff2009Pullback} for general background on Thurston's theory of rational maps. 

In the following, let $f\colon (\Sp, P)\to (\Sp,P)$ be a Thurston map. The \emph{orbifold} $\Orb_f$ associated with $f$ is the topological orbifold with the underlying space $\Sp$ and cone points at every point $p \in P_f$ of order $\nu(p)$,
where $\nu(p)$ is the least common multiple of the local degrees of
the iterates $f^n$ at all points $q \in f^{-n}(\{p\})$, over all $n \ge 1$. 
The \emph{Euler characteristic} of $\Orb_f$ is the number
\[
  \chi(\Orb_f) = 2-\sum\limits_{p \in P_f}
  \left(1-\frac{1}{\nu(p)}\right).
\] 
The orbifold $\Orb_f$ is called \emph{hyperbolic} if  $\chi(\Orb_f)< 0$, and parabolic if $\chi(\Orb_f)= 0$ (note that $\chi(\Orb_f)\leq 0$ for all Thurston maps $f$).
For more details see, e.g., \cite[Chapter 2]{BM17}.

We denote by $\T_P$ the \emph{Teichm\"uller space} of the marked sphere $(\Sp,P)$. It can be defined as the set of all equivalence classes $[\phi]$ of orientation-preserving homeomorphisms $\phi$  from $(\Sp,P)$ to the Riemann sphere $\CDach$.  Here two such homeomorphism  $\phi_1,\phi_2\colon (\Sp,P)\to\CDach$ are equivalent if and only if there is a M\"obius transformation $M$ such that $M\circ\phi_1$ is isotopic to $\phi_2$ rel.\ $P$.

 We denote by $\M_P$ the corresponding \emph{moduli space} of $(\Sp,P)$, which is the space of all injections from $P$ to $\CDach$ modulo postcomposition with M\"obius transformations. The Teichm\"uller and moduli spaces are complex manifolds of dimension $|P|-3$. We denote by $\pi\colon \T_P\to\M_P$ the canonical holomorphic universal covering map given by $[\phi]\mapsto[\phi|_{P}]$. Let $G_P$ be the \emph{pure mapping class group} of $(\Sp, P)$, that is, it is the group of all orientation-preserving homeomorphisms $h\colon (\Sp,P) \to (\Sp,P)$ with $h|_{P}=\id_{P}$ modulo isotopy rel.\ $P$. Then $G_P$ is the group of deck transformations of the universal cover $\pi$.

Let $\phi\colon (\Sp,P)\to \CDach$ be an orientation-preserving homeomorphism. Then there exist an orientation-preserving homeomorphism $\widetilde\phi\colon(\Sp,P)\to \CDach$ and a rational map $f_\phi\colon \CDach\to\CDach$ such that the following diagram commutes:
\begin{equation}\label{eq:Thurston}
\xymatrix{(\Sp,P) \ar[r]^{\widetilde{\phi}} \ar[d]_f
&{(\widehat{\mathbb{C}},\widetilde{\phi}(P))} \ar[d]^{f_\phi} \\ (\Sp,P) \ar[r]^{\phi}
&{(\widehat{\mathbb{C}},\phi(P))}\rlap{.}}
\end{equation} 
Namely, the map $\widetilde\phi$ is obtained by pulling back the standard complex structure on $\CDach$ by $\phi\circ f$ and afterwards applying the Uniformization Theorem; the map $f_\phi$ is then the composition $\phi\circ f\circ \widetilde{\phi}^{-1}$. The diagram~\eqref{eq:Thurston} induces the (\emph{Thurston}) \emph{pullback map} $\sigma_f\colon \T_P\to\T_P$ given by $[\phi]\mapsto[\widetilde{\phi}]$. It is well-known that $\sigma_f$ is well-defined and holomorphic. 

The pullback map $\sigma_f$ was used by W.~Thurston in his proof of the celebrated \emph{Characterization Theorem of Rational Maps} \cite{Douady1993}. One of the key observations is that the Thurston map $f$ is conjugate to a rational map modulo isotopy (rel.\ $P$) if and only if the pullback map $\sigma_f$ has a fixed point.
When the orbifold $\Orb_f$ is hyperbolic, the fixed point $\tau_f$ of $\sigma_f$ is unique (if it exists), and the iterates $\sigma^n_f(\tau)$ converge to $\tau_f$ as $n\to\infty$ for any starting point $\tau\in \T_P$.

Following \cite{koch2013teichmuller}, the \emph{Hurwitz space} $\W_f$ associated with $f$ is the set of all triples $(f_\phi,\phi|_P,\widetilde\phi|_P)$, where $\phi\colon (\Sp,P)\to \CDach$ is an orientation-preserving homeomorphism and $\widetilde\phi, f_\phi$ are determined by the diagram \eqref{eq:Thurston}, modulo the following equivalence relation: given two triples $(f_1, \iota_1, j_1)$ and $(f_2, \iota_2, j_2)$, where $f_1,f_2$ are rational maps on $\CDach$ and $\iota_1,\iota_2,j_1,j_2$ are injections from $P$ to $\CDach$, we declare $(f_1, \iota_1, j_1) \sim (f_2, \iota_2, j_2)$ if there exists two M\"obius transformations $M_\iota$ and $M_j$ such that 
\[(f_2,\iota_2,j_2)= (M_\iota\circ f_1 \circ M_j^{-1}, \, M_\iota\circ  \iota_1,\, M_j \circ j_1).\]
The Hurwitz space $\W_f$ admits a natural complex analytic structure and is isomorphic (as a complex manifold) to 
the quotient of $\T_P$ by the \emph{group of liftables} $H_f$. The latter is a finite index subgroup of  the pure mapping class group $G_P$ given by
\[H_f:=\{[h]\in G_P\colon  \text{there exists } [\widetilde h]\in G_P \text{ such that } h \circ f = f \circ \widetilde h\}.\]

The induced map $\omega_f\colon \T_P\to \W_f$ defined by 
\[
[\phi]\mapsto[(f_\phi,\phi|_P,\widetilde\phi|_P)]
\] is a well-defined holomorphic covering map. Furthermore, we have two holomorphic maps $X,Y\colon \W_f\to \M_P$ given by 
\[[(f_\phi,\phi|_P,\widetilde\phi|_P)]\mapsto[\widetilde\phi|_P] \text{\quad and \quad} [(f_\phi,\phi|_P,\widetilde\phi|_P)]\mapsto[\phi|_P],\]
respectively, such that the following diagram commutes
\begin{align}\label{eq:Wspace}
\xymatrix{ & \T_P\ar[dd]_{\pi}\ar[rr]^{\sigma_f} \ar[dr]^{\omega_f} & &
\T_P \ar[dd]^{\pi} \\ && \W_f\ar[dl]_Y \ar[dr]^X &\\ & \M_P & & \M_P.}
\end{align}
Actually, $Y$ is a covering map of finite degree. 

By  work of Selinger \cite{selinger2012thurston}, the Thurston pullback map $\sigma_f$ extends continuously to the completion $\overline{\T_P}$ of the Teichm\"uller space $\T_P$ with respect to the Weil--Petersson metric. We denote the corresponding extension also by $\sigma_f\colon \overline{\T_P}\to \overline{\T_P}$ for simplicity. 
Since the action of $G_P$ on $\T_P$ extends continuously to $\overline{\T_P}$ as well, we obtain the following commutative diagram extending \eqref{eq:Wspace}:
\begin{align}\label{eq:WspaceBar}
\xymatrix{ & \overline{\T_P} \ar[dd]_{\pi}\ar[rr]^{\sigma_f} \ar[dr]^{\omega_f} & &
\overline{\T_P} \ar[dd]^{\pi} \\ && \overline{\W_f}\ar[dl]_{Y} \ar[dr]^{X} &\\ & \overline{\M_P} & & \overline{\M_P},}
\end{align}
where $\overline{\W_f}:=\overline{\T_P}/H_f$ and $\overline{\M_P}:=\overline{\T_P}/G_P$. 
%
%

From now on we specialize to the case when $|P|=4$. In this case, up to M\"obius conjugation, we may assume that $P=\{0,1,\infty,z_0\}$ with $z_0\in\CDach\setminus \{0,1,\infty\}$. Then we have natural identifications 
$\T_P=\Halb=\{z\in \C: \Im(z)>0\}$ and $\M_P = \CDach \setminus \{0,1,\infty\}$  such that the universal covering map $\pi\colon \T_P \to \M_P$ can  be  described in the following way. Let $\Omega$ be the open ideal hyperbolic  triangle in $\Halb$ with vertices at $0,1,\infty$. We define $\pi$ on $\Omega$ to be the Riemann map that sends $\Omega$ onto $\Halb\subset \M_P$ so that the homeomorphic extension of $\pi$ to $\overline {\Omega}$ satisfies $\pi(0)=0$, $\pi(1)=1$, and $\pi(\infty)=\infty$. We then extend $\pi$ to the whole upper half-plane by reflection. It follows that the group $G_P$ of deck transformations of $\pi$ is freely generated by the M\"obius transformations $z\mapsto z+2$ and $z\mapsto\frac{z}{-2z+1}$. Furthermore, the Weil--Petersson completion $\overline{\T_P}$ is given by $\Halb\cup \widehat{\Q}$, where $\widehat{\Q}=\Q\cup \{\infty\}$ denotes the set of extended rational numbers. In addition,  $\overline{\M_P}=\CDach$ and $\overline{\W_f}$ is a closed Riemann surface of finite genus. 
By Riemann's theorem on removable singularities, the continuous extensions
\begin{equation}\label{eq:mod_corr}
    X, Y\colon \overline{\W_f}\to\overline{\M_f},
\end{equation}
of the holomorphic maps $X, Y\colon \W_f \to \M_f$
are holomorphic as well.
We call \eqref{eq:mod_corr} the (\emph{extended}) \emph{moduli space correspondence} associated with $f$.

For $|P|=4$ there is a canonical bijective correspondence between the 
 \emph{Weil--Petersson boundary} $\partial \T_P:=\overline{\T_P}\setminus \T_P = \widehat{\Q}$ and  the set $\mathscr{C}_P$ of isotopy classes of essential (non-oriented) simple closed curves in $\Sp\setminus P$. Namely, each point $\tau \in 
  \T_P=\Halb$ corresponds to a unique conformal structure on the sphere $S^2$ and hence to  a unique complete hyperbolic metric on the punctured sphere 
  $S^2\setminus P$. As $\tau\in \Halb$ approaches a point $r\in \partial \T_P$,
  there exists a unique isotopy class  $[\gamma]\in \mathscr{C}_P$ such that 
  the length of the unique hyperbolic geodesic in  $[\gamma]$ approaches $0$. 
 This gives a bijective correspondence  $r\in \partial \T_P\longleftrightarrow [\gamma]\in \mathscr{C}_P$. 
%

Let us set $\overline{\partial T_P}:=\partial\T_P \cup \{\trivial\}$, where $\trivial$ represents the interior of $\overline{\T_P}$. Then the extension $\sigma_f\colon \overline{\T_P}\to \overline{\T_P}$ of the Thurston pullback map  induces a natural \emph{pullback map} $\partial\sigma_f$ on $\overline{\partial T_P}$: given $r\in \widehat{\Q}=\partial\T_P$, we set $\partial\sigma_f(r) = \sigma_f(r)$ if $\sigma_f(r)\in \partial \T_P$, and $\partial\sigma_f(r)= \trivial$ if  $\sigma_f(r)\in \T_P$; in addition, we also set $\partial\sigma_f(\trivial)=\trivial$. It then immediately follows from \cite[Prop.~6.1]{selinger2012thurston} that the following diagram commutes: 
\begin{equation}\label{eq:curves-and-boundary}\xymatrix{\overline{\mathscr{C}_P}=\mathscr{C}_P\cup\{\trivial\} \ar[d]_{\mu_f} \ar@{<->}[r] 
&\overline{\partial \T_P}= \partial\T_P \cup\{\trivial\}  \ar[d]_{\partial\sigma_f}
\\ \overline{\mathscr{C}_P}=\mathscr{C}_P\cup\{\trivial\} \ar@{<->}[r]
&\overline{\partial \T_P}= \partial\T_P \cup\{\trivial\}.}
\end{equation}
Here, $\mu_f\colon \overline{\mathscr{C}_P}\to \overline{\mathscr{C}_P}$ is the pullback map on $\overline{\mathscr{C}_P}$ defined in the introduction and $\overline{\mathscr{C}_P}\longleftrightarrow\overline{\partial \T_P}$ represents the obvious extension of the canonical one-to-one correspondence between $\mathscr{C}_P$ and $\partial \T_P$.

\section{Tile covers and  global attractors}\label{sec:FGA-invariant-tiling}

We now provide the proof of Theorem~\ref{thm-intro:FGA-tiling} stated in  the introduction. It  uses  diagram \eqref{eq:curves-and-boundary} relating the pullback maps $\mu_f$ and $\partial\sigma_f$. 
%

 
\begin{proof}[Proof of Theorem~\ref{thm-intro:FGA-tiling}]  Let $\mathcal{U}$ be a cover of $\overline{\T_P}= \Halb^*$
by some of its subsets  as in the statement, and $r_0\in \partial\T_P = \widehat{\Q}$ be an arbitrary
 point in the Weil--Petersson boundary. We then define a sequence $\{T_n\}_{n\in \N_0}$  of sets  $T_n\in\mathcal{U}$ inductively as follows. For  $T_0$ we choose 
  a set  in  $\mathcal{U}$ with $r_0\in T_0 \cap \partial\T_P$.
  When $T_n\in \mathcal{U}$ for some $n\in \N_0$ has been selected, by hypothesis  \ref{item:ii}
  we can find a set $T_{n+1}\in \mathcal{U}$ such that $\sigma_f(T_n)\sub T_{n+1}$, 
  providing the inductive step in the definition of  $\{T_n\}_{n\in \N_0}$. 
  
We denote by $\tau_f\in \T_P=\Halb$ the unique fixed point of $\sigma_f$. By
\ref{item:i} we  can then choose a neighborhood $U\sub \Halb$ of $\tau_f$ that meets only finitely many 
sets in $\mathcal{U}$.   Let  $\NC$ be the finite family of all sets $T\in \mathcal{U}$ with $T\cap U\ne \emptyset$. 
 
    \begin{claim} We have $T_n \in \NC$ for all sufficiently large $n$.
    \end{claim}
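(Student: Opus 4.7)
The plan is to produce, inside each $T_n$, an auxiliary point of the Teichm\"uller space $\T_P$ whose orbit under $\sigma_f$ converges to $\tau_f$, and then read off membership in $\NC$ from this convergence.

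First, invoking hypothesis \ref{item:ia}, I would pick some $\tau_0 \in T_0 \cap \T_P$. The invariance condition \ref{item:ii} gives $\sigma_f(T_n) \subset T_{n+1}$ for each $n$, so by induction $\sigma_f^n(\tau_0) \in T_n$ for every $n \in \N$. Next, I would recall the fact noted in Section~\ref{sec:pullback-map} that when $\Orb_f$ is hyperbolic the iterates of $\sigma_f$ converge pointwise on $\T_P$ to the unique fixed point $\tau_f$. Applied to $\tau_0$ this gives $\sigma_f^n(\tau_0) \to \tau_f$, and since $U$ is a neighborhood of $\tau_f$ there exists $N \in \N$ with $\sigma_f^n(\tau_0) \in U$ for all $n \geq N$. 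For such $n$ the set $T_n \cap U$ contains $\sigma_f^n(\tau_0)$, hence is nonempty, so $T_n \in \NC$ as required.

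The only delicate point, and the reason the argument is as clean as it is, is that the convergence $\sigma_f^n(\tau) \to \tau_f$ is guaranteed only for starting points in the open Teichm\"uller space $\T_P$, not for points on the Weil--Petersson boundary $\partial \T_P$. The initial datum $r_0 \in \partial \T_P$ cannot be iterated directly in this way, which is precisely what forces the use of hypothesis \ref{item:ia} to replace $r_0$ by an interior witness $\tau_0 \in T_0 \cap \T_P$. Once this substitution is made, conditions \ref{item:i} and \ref{item:ii} are used only through their consequences already built into the definitions of $\NC$ and of the sequence $\{T_n\}$, and no further ingredient is needed; I do not anticipate any serious obstacle in executing the argument.
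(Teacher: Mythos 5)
Your argument is correct and is essentially identical to the paper's proof: both select an interior witness $\tau_0\in T_0\cap\T_P$ via \ref{item:ia}, propagate it by induction using $\sigma_f(T_n)\subset T_{n+1}$, and invoke the pointwise convergence $\sigma_f^n(\tau_0)\to\tau_f$ to conclude $T_n\cap U\neq\emptyset$ for large $n$. No differences worth noting.
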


    Indeed, by \ref{item:ia} we can choose a point $\tau_0 \in T_0\cap \T_P$. We then    set $\tau_n:= \sigma^n_f (\tau_0)$ for $n\in \N_0$. The choice  of $\{T_n\}_{n\in \N_0}$ and  induction imply that 
     $\tau_n\in T_n$  for all $n\in \N_0$. Since $\tau_n \to \tau_f$ as $n\to \infty$, 
 we have     $T_n\cap U\ne \emptyset$ and so $T_n\in \NC$ for all sufficiently large $n$, proving the Claim.

  \smallskip        
By construction, we also have $\sigma_f^n(r_0) \in T_n$ for all $n\in\N_0$.  This and the Claim imply that under iteration of $\sigma_f$ the point $r_0\in \partial\T_P $ eventually lands in $\T_P$  or in the 
  set  $\bigcup_{T\in \NC} (T\cap \partial \T_P)$. Since the  latter  set does not depend on $r_0$ and is finite by  \ref{item:ia},  the pullback map $\partial \sigma_f$ on $\overline{\partial T_P}$ has a finite global attractor. 
  The desired statement  now immediately follows from diagram~\eqref{eq:curves-and-boundary}. 
\end{proof}

\begin{remarks}\mbox{}
\begin{enumerate}[label=\arabic*), leftmargin=*]
\item If $\tau_f$ is the unique fixed point of $\sigma_f$ in $\T_P$ and $U$ a sufficiently small neighborhood of $\tau_f$,  then the  proof of the theorem shows that the set 
\[\{\text{$\partial T\cap \partial \T_P$: $T\in \mathcal{U}$ and
 $ T\cap U\ne \emptyset$}\}\cup \{\trivial\}.\]
 is finite and contains the  global attractor of 
 $\partial\sigma_f$.
 
\item The proof also shows that conditions~\ref{item:i} and~\ref{item:ia} in 
 Theorem~\ref{thm-intro:FGA-tiling} can be relaxed as follows:
 \begin{enumerate}[label=\normalfont{(\roman*')}]

\smallskip
 \item\label{item:i'} the 
 unique fixed point of $\sigma_f$ in $\T_P$ has a neighborhood that meets only finitely many  sets  in $\mathcal{U}$, and each of these sets contains 
  at most finitely many points in $\partial \T_P$, 

\smallskip 
    \item\label{item:ia'} every set in $\mathcal{U}$  contains at least one point in 
    $\T_P$.
    \end{enumerate}

\end{enumerate}    
\end{remarks}

\section{Applications and examples}\label{sec:examples-and-applications}

In this section, we present various applications of Theorem~\ref{thm-intro:FGA-tiling}. We start with providing a sufficient condition for the existence of a $\sigma_f$-invariant cover of the Weil--Petersson completion using the extended moduli space correspondence. Under suitable assumptions, we will obtain 
the sets in our desired cover of $\overline{\T_P}$ from subsets of  moduli space $\M_P$
by lifting them by  the universal covering map $\pi\colon \T_P \to \M_P$ (and afterwards taking the closures in $\overline{\T_P}$).
In order to facilitate this lifting, we have to impose some topological conditions 
on the involved sets. We start with some relevant definitions.

A \emph{tile} $T$ in a topological surface 
$\X$  is  the closure $T=\overline {\Omega}$ of a simply connected region 
$\Omega\sub \X$ with connected and locally connected boundary $\partial \Omega$.
If for the surface  some natural completion $\overline {\X}$ is under consideration 
(such as $\overline{\X}=\Halb^*$ for $\X=\Halb$), then we always consider the closure and boundary of $\Omega \sub \X$ in  $\overline {\X}$. In this case, the tile $T=\overline{\Omega}$ is a subset of $\overline {\X}$.
 We define  $\inter^*(T):=\Omega$ and 
$\partial^* T:=\partial \Omega$ as distinguished subsets of the tile $T$.  
In general, $\inter^*(T)$ and $\partial^* T$ will be different from the interior and boundary of $T$ considered as a subset of the ambient space (this is the reason we use a $*$ in our notation). 

Let $\D=\{z\in \C: |z|<1\}$ denote the open unit disk. If $T$ is a tile in $\X$, then there exists  a continuous surjective map $\eta\colon \overline {\D}\ra T$  such that the restriction $\eta|_\D$ is a homeomorphism  of $\D$ onto  $\inter^*(T)$ and 
$\eta(\partial \D)=\partial^* T$. We call such a map $\eta$ a {\em parametrization} of the tile $T$. Conversely, if $\X$ is a surface with completion $\overline{\X}$ and $\eta\: \overline {\D}\ra \overline{\X}$ is a continuous map such that $\eta|_\D$ is a homeomorphism of $\D$ onto $\eta(\D)$, then 
 $T:=\eta(\overline {\D})$ can be considered as a tile in $\X$ with $\inter^*(T)=\eta(\D)$ and $\partial^* T=\eta(\partial \D)$.   


A \emph{tile cover} $\mathcal{U}$ of $\overline{\X}$ is a cover of $\overline{\X}$ given by tiles. 
We call such a cover $\mathcal{U}$ a {\em tessellation} of $\overline{\X}$ if no two distinct tiles in $\mathcal{U}$  have common interior points, that is, if $T,T'\in 
\mathcal{U}$ and $T\ne T'$, then $\inter^*(T)\cap \inter^*(T')=\emptyset$.

In the following, we again suppose that $f\colon (\CDach, P)\to (\CDach, P)$ is a  rational Thurston map with $|P|=4$ 
    and a hyperbolic orbifold. As we discussed, we may assume that  
    $\T_P = \Halb$, $\partial \T_P = \widehat{\Q}$, and 
      $\M_P=\CDach\setminus \Theta$, where  $\Theta\sub \CDach$ and $|\Theta|=3$. As all thrice-punctures spheres are M\"obius equivalent, we 
   could assume that $\Theta=\{0,1,\infty\}$ (as in our discussion in Section~\ref{sec:pullback-map}),    but it is useful to allow more general sets here.
 We also consider the continuous map $\pi\:
  \overline{\T_P}\ra \overline{\M_P}$ obtained 
  from extending the  holomorphic universal covering map $\pi \: \T_P\ra \M_P$.

  Now let  $G$ be a finite connected planar embedded graph in $\CDach=\overline{\M_P}$ with the vertex set $V(G) \supset\Theta$ (that is, $G$ is a finite connected $1$-dimensional CW-complex in $\CDach$ whose set of $0$-cells contains $\Theta$).  Each complementary component of $G$ in $\overline{\M_P}$
  is a simply connected region with connected and locally connected boundary. It follows that the 
  closure of such a complementary component is a tile in $\M_P$, which we call 
  an \emph{$\M$-tile}. It is clear that $\M$-tiles form a tessellation 
  of $\overline{\M_P}$. 
  
   We now  lift this tessellation 
 to $\overline{\T_P}$ by $\pi$. Essentially, we can do this, because $\partial\M_P=\Theta \sub G$ and so 
 $\pi$ is a covering map over each complementary component of $G$ in $\overline{\M_P}$. 
Then, by lifting parametrizations of $\M$-tiles, one can easily show that each complementary component of $\pi^{-1}(G)$ in $\overline{\T_P}$ 
has a closure that is a tile in $\T_P$. We call these sets  {\em $\T$-tiles}. If $T$ is a $\T$-tile, then $M:=\pi(T)$ is an $\M$-tile and $\pi$ sends $\inter^*(T)$ homeomorphically onto 
$\inter^*(M)$.  Based on standard topological lifting arguments one can show 
that  $\T$-tiles form a tessellation $\mathcal{U}$ of  $\overline{\T_P}$. Moreover, the tessellation $\mathcal{U}$ satisfies conditions  \ref{item:i} and \ref{item:ia} in Theorem~\ref{thm-intro:FGA-tiling}. We leave  the details of these arguments to the reader.

We want to know when this tessellation $\mathcal{U}$ also satisfies condition 
  \ref{item:ii}  in Theorem~\ref{thm-intro:FGA-tiling}.
  For this we say that the graph $G$ is \emph{invariant with respect to the extended moduli space correspondence \eqref{eq:mod_corr}}
if $X^{-1}(G)\subset Y^{-1}(G)$. 

\begin{lemma}\label{lem:inv_tiling} If the graph $G$ is invariant with respect to the extended moduli space correspondence \eqref{eq:mod_corr}, then the tessellation 
$\mathcal{U}$  of 
$\overline{\T_P}$ formed by 
$\T$-tiles  is  $\sigma_f$-invariant. 
\end{lemma}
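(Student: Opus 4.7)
My plan is to reduce the proof to verifying condition \ref{item:ii} of Theorem~\ref{thm-intro:FGA-tiling}, since conditions \ref{item:i} and \ref{item:ia} for the tessellation $\mathcal{U}$ were already noted to follow from the lifting construction of the $\T$-tiles in the discussion preceding the lemma.

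The heart of the matter is a short diagram chase in \eqref{eq:WspaceBar}, using the two factorizations
$$\pi\circ\sigma_f = X\circ\omega_f \qquad\text{and}\qquad \pi = Y\circ\omega_f,$$
which are valid on all of $\overline{\T_P}$. Given the invariance hypothesis $X^{-1}(G)\subset Y^{-1}(G)$, I would first show that $\sigma_f$ maps $\overline{\T_P}\setminus \pi^{-1}(G)$ into itself: if $\tau\in\overline{\T_P}$ satisfies $\pi(\tau)\notin G$, then $Y(\omega_f(\tau))=\pi(\tau)\notin G$, hence $\omega_f(\tau)\notin Y^{-1}(G)\supset X^{-1}(G)$, and therefore $\pi(\sigma_f(\tau))=X(\omega_f(\tau))\notin G$. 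This single step is where the entire arithmetic content of the correspondence is used.

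Now I would fix a $\T$-tile $T\in\mathcal{U}$ and pass from interiors to the whole tile. By the construction of the tessellation, $\overline{\T_P}\setminus\pi^{-1}(G)$ is the disjoint union $\bigsqcup_{T'\in\mathcal{U}}\inter^*(T')$, and each $\inter^*(T')$ is both open and a connected component of this set. The continuous image $\sigma_f(\inter^*(T))$ is connected and contained in $\overline{\T_P}\setminus\pi^{-1}(G)$ by the previous step, so it lies in a single $\inter^*(T')$. Finally, since $T'$ is closed and $\sigma_f$ continuous, $\sigma_f^{-1}(T')$ is a closed set containing $\inter^*(T)$; taking closures yields $T=\overline{\inter^*(T)}\subset \sigma_f^{-1}(T')$, i.e., $\sigma_f(T)\subset T'$, which is condition \ref{item:ii}.

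The only point that deserves a moment of care, and which I would cite rather than derive from scratch, is that the tile interiors $\inter^*(T')$ really are the connected components of $\overline{\T_P}\setminus\pi^{-1}(G)$; this is built into the lifting construction of $\mathcal{U}$ from the $\M$-tile tessellation and is what allows connectivity of $\sigma_f(\inter^*(T))$ to force containment in a single tile rather than just in the union of tile interiors.
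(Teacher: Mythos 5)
Your proof is correct and follows essentially the same route as the paper: both verify condition \ref{item:ii} via the diagram chase $\pi\circ\sigma_f=X\circ\omega_f$, $\pi=Y\circ\omega_f$ together with $X^{-1}(G)\subset Y^{-1}(G)$, then use connectivity of $\sigma_f(\inter^*(T))$ to trap it in a single tile interior and pass to closures. The only (cosmetic) difference is that the paper first locates the image in an $\M$-tile $\inter^*(M')$ downstairs and lifts, whereas you argue directly in $\overline{\T_P}$ using that the $\inter^*(T')$ are the components of $\overline{\T_P}\setminus\pi^{-1}(G)$ --- a fact that, as you note, is built into the construction of $\mathcal{U}$.
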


\begin{proof} Let $T$ be an arbitrary $\T$-tile. Then $M:=\pi(T)$ is an $\M$-tile
with $\pi(\inter^*(T))= \inter^*(M)$.  
By the diagram \eqref{eq:WspaceBar}, the set $\omega_f(\inter^*(T))$ is  connected 
and contained in  $Y^{-1}(\inter^*(M))\sub \overline{\W_f}\setminus Y^{-1}(G)$.   Since $X^{-1}(G)\subset Y^{-1}(G)$, it follows that $\omega_f(\inter^*(T)) \sub  \overline{\W_f}\setminus X^{-1}(G)$.  This in turn implies that 
\[
(\pi\circ \sigma_f)(\inter^*(T))=(X\circ \omega_f)(\inter^*(T))\sub  \overline{\M_f}\setminus G\]
  is a connected set in the complement of $G$. Therefore, there exists an $\M$-tile 
$M'$ with  $(\pi\circ \sigma_f) (\inter^*(T))\sub \inter^*(M')$. Lifting this inclusion by $\pi$, we see that the connected set $\sigma_f(\inter^*(T))$ must be contained in some $\T$-tile $T'$ with $\pi(T')=M'$.
Then also $\sigma_f(T)\sub T'$ and the statement follows.  
\end{proof}

Similarly, we provide a sufficient condition for the existence of a $\sigma_f$-invariant tile cover of $\overline {\T_P}$ arising from a tile cover of $\overline{\M_P}$. We omit the proof as it is completely analogous to the proof of Lemma~\ref{lem:inv_tiling}. 

\begin{lemma}\label{lem:inv_covering2} Suppose there is a finite tile cover $\mathcal{V}$ of $\overline{\M_P}$ such that  every tile $M\in \mathcal{V}$ 
 satisfies the following two conditions:
\begin{enumerate}[label=\normalfont{(\roman*)}]
    \item\label{cov1'} $\inter^*(M) \cap \Theta =\emptyset$,  
\smallskip
    \item \label{cov2'} if $C$ is a component of $Y^{-1}(\inter^*(M))$,
    then $X(C)\sub \inter^*(M')$ for some tile 
    $M'\in  \mathcal{V}$.
\end{enumerate}
Then $\overline{\T_P}$ admits a tile cover satisfying the conditions in Theorem~\ref{thm-intro:FGA-tiling}. 
\end{lemma}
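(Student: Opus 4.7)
The plan is to mirror the proof of Lemma~\ref{lem:inv_tiling}, but without the convenience of a graph $G$ whose preimage under $\pi$ automatically produces a tessellation. The key observation  is that condition~\ref{cov1'} implies $\inter^*(M)\sub \M_P$ for each $M\in \mathcal{V}$, so that $\pi^{-1}(\inter^*(M))$ sits inside $\T_P=\Halb$ and $\pi$ restricts to a covering map over $\inter^*(M)$. Since $\inter^*(M)$ is simply connected, each component of $\pi^{-1}(\inter^*(M))$ is mapped homeomorphically onto $\inter^*(M)$ by $\pi$. I would then form the candidate cover $\mathcal{U}$ of $\overline{\T_P}$ by taking, for every $M\in \mathcal{V}$ and every component $D$ of $\pi^{-1}(\inter^*(M))$, the closure $T:=\overline{D}$ in $\overline{\T_P}$, and declaring $\inter^*(T):=D$.

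The first technical step is to verify that each such $T$ is a tile, i.e.\ that $T$ admits a parametrization from $\overline{\D}$. For this, I would pick a parametrization $\eta\colon \overline{\D}\to M$ and lift $\eta|_\D$ through the covering $\pi$ to a homeomorphism onto $D$; since $\pi$ extends continuously to $\overline{\T_P}$ and $\eta$ extends continuously to $\overline{\D}$, a standard argument using the Weil--Petersson completion shows that the lift extends continuously to $\overline{\D}$ with image $T$, and that $T\cap \partial\T_P$ is the image of the finite preimage of $\partial^* M\cap\Theta$ under this boundary map, hence finite. This simultaneously establishes that the tiles of $\mathcal{U}$ satisfy condition~\ref{item:ia} of Theorem~\ref{thm-intro:FGA-tiling}, and local finiteness inside $\T_P$ (condition~\ref{item:i}) follows from the finiteness of $\mathcal{V}$ together with the fact that $\pi$ is a local homeomorphism on $\T_P$.

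It remains to check $\sigma_f$-invariance, which is the heart of the lemma. Fix $T\in \mathcal{U}$ with $\inter^*(T)=D$, a component of $\pi^{-1}(\inter^*(M))$ for some $M\in \mathcal{V}$. From the commutative diagram~\eqref{eq:WspaceBar} one has $Y\circ \omega_f=\pi$, so $\omega_f(D)$ is a connected subset of $Y^{-1}(\inter^*(M))$ and therefore lies in a single component $C$ of $Y^{-1}(\inter^*(M))$. Hypothesis~\ref{cov2'} gives $X(C)\sub \inter^*(M')$ for some $M'\in \mathcal{V}$, and combining with $X\circ \omega_f=\pi\circ \sigma_f$ yields
\[
(\pi\circ \sigma_f)(D) = X(\omega_f(D))\sub X(C)\sub \inter^*(M').
\]
Hence $\sigma_f(D)$ is a connected subset of $\pi^{-1}(\inter^*(M'))$, so it lies in a single component $D'$ of that preimage, and the corresponding $\T$-tile $T':=\overline{D'}\in \mathcal{U}$ satisfies $\sigma_f(T)\sub T'$ by continuity of $\sigma_f$ on $\overline{\T_P}$.

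The main obstacle I expect is purely topological: carefully verifying that the lift of a parametrization $\eta\colon \overline{\D}\to M$ extends continuously to the Weil--Petersson completion and delivers a genuine tile with the correct distinguished interior and boundary. Once this lifting bookkeeping is in place, the implication from \ref{cov2'} to $\sigma_f$-invariance is a routine diagram chase that precisely parallels the argument for Lemma~\ref{lem:inv_tiling}, and the conclusion then follows from Theorem~\ref{thm-intro:FGA-tiling}.
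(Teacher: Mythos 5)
Your argument is correct and is precisely the proof the paper has in mind: the authors omit the proof of this lemma entirely, stating only that it is ``completely analogous'' to that of Lemma~\ref{lem:inv_tiling}, and your diagram chase through $\omega_f$, $Y$, and $X$ using hypothesis~\ref{cov2'} is exactly that analogue, with components of $Y^{-1}(\inter^*(M))$ replacing the graph-invariance condition. The topological bookkeeping you flag (lifting parametrizations to $\overline{\T_P}$, local finiteness, finiteness of $T\cap\partial\T_P$) is likewise left to the reader in the paper's own construction of the $\T$-tile tessellation preceding Lemma~\ref{lem:inv_tiling}.
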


%
%
%

In the rest of the paper, we provide various families (as well as explicit examples) of rational Thurston maps that admit an invariant graph with respect to the extended moduli space correspondence, and thus enjoy a $\sigma_f$-invariant tessellation of the Weil--Petersson completion of  Teichm\"uller space. 

\subsection{Rational Thurston maps with marked critical points}\label{subsec:four-periodic-crit} 
In this subsection, we prove Theorem~\ref{thm:inv-tiling-crit-points}.

\begin{proof}[Proof of Theorem~\ref{thm:inv-tiling-crit-points}] 
 Let $f\colon (\CDach, P)\to (\CDach, P)$ be a rational Thurston map 
 as in the statement. Since $f(P)\sub P$, the  assumption that every point in $P$ is periodic  is equivalent to $f(P)=P$.    

To  describe the moduli space correspondence $X,Y\colon \W_f\to \M_P$ in this case, we fix a subset $\Theta\subset P$ with $|\Theta| = 3$, so that $\M_P=\CDach\setminus \Theta$. We denote by  $p\in \CDach$  the unique point in $P\setminus \Theta$.
Let $\phi\colon (\CDach,P)\to (\CDach, \phi(P))$ be an orientation-preserving homeomorphism normalized so that $\phi|_\Theta=\id_\Theta$. Then there is a unique orientation-preserving homeomorphism $\widetilde{\phi}\colon (\CDach,P)\to (\CDach,\widetilde{\phi}(P))$ with $\widetilde \phi|_\Theta = \id_\Theta$  such that the map $f_\phi:=\phi\circ f\circ {\widetilde\phi}^{-1}\colon \CDach\to\CDach$ is rational. It follows that from the maps $\phi,f, \widetilde{\phi}, f_\phi$ we obtain a  commutative diagram as in \eqref{eq:Thurston} (with $S^2=\CDach)$, and thus $\sigma_f([\phi])=[\widetilde{\phi}]$. 

Set $x:=\widetilde{\phi}(p)$ and $y:=\phi(p)$. Then $\pi([\phi])=y$ and $\pi([\widetilde\phi])=x$. Furthermore, with respect to our normalizations, a point in $\W_f$ is represented by the triple $(f_\phi, y, x)$, and the maps $X$ and $Y$ send this point to $x$ and $y$, respectively.
Note that, by our assumption $C_f\sub P$,  the rational map $f_\phi$ satisfies 
$C_{f_\phi}= \widetilde{\phi}(C_f)\subset \Theta\cup \{x\}$ and 
\[ f_\phi(\Theta\cup\{x\}) =(f_\phi\circ  \widetilde{\phi})(P)=(\phi \circ f)(P)=\phi(P)= \Theta\cup \{y\}.
\] 

Let $G$ be the circle in $\CDach$ with $\Theta\subset G$, which we view as a connected planar embedded graph with $V(G)=\Theta$ and exactly three edges. 
We now use the following result of Eremenko and Gabrielov: \emph{if all critical points of a rational function lie on a circle in the Riemann sphere, then the function maps this circle into a circle} \cite{eremenko1999}. 

This result applied to the map $f_\phi$ implies that if $x\in G$ then $y\in G$. Hence, the graph $G$ is invariant 
with respect to the extended moduli space correspondence \eqref{eq:mod_corr}, meaning that  $X^{-1}(G)\sub Y^{-1}(G)$. It then follows from Lemma~\ref{lem:inv_tiling} that the closures of the complementary components 
of  $\pi^{-1}(G)$ give us a $\sigma_f$-invariant tile cover (actually a tessellation, see the remark below) 
$\mathcal{U}$ of $\overline{\T_P}$. \end{proof}

\begin{remark}Since  
 $G$ is   the circle in $\overline{\M_P}\cong \CDach$ passing through the punctures of $\M_P$,  the closures of the complementary components of $\pi^{-1}(G)$ in 
 $\overline{\T_P}=\Halb^*$ are ideal hyperbolic triangles. They form a $\sigma_f$-invariant tessellation of $\overline{\T_P}$. 
\end{remark}

%

By Theorem~\ref{thm-intro:FGA-tiling}, we obtain as an immediate consequence 
that if $f$ is a rational Thurston map as in Theorem~\ref{thm:inv-tiling-crit-points}, then  the pullback relation $\xleftarrow{f}$
 has a finite global attractor. 

   For the latter conclusion we may in fact drop the assumption that 
the points in $P$ are periodic. Indeed, if $P$ contains a point that is not 
periodic, then there exists a point $p\in P$ that has no preimage in $P$. If we set $\Theta:= P \setminus \{p\}$, then  $f(P)\sub \Theta$ and $P\sub f^{-1}(\Theta)$. Moreover, since $C_f\sub P$, we then necessarily have $P_f\sub \Theta$. 

    Now let $\phi$ be a homeomorphism as in the definition of $\T_P$ with $\phi|_\Theta = \id_\Theta$.  
    Then $\id_{\CDach}$  is isotopic to $\phi$  relative to $ \Theta$, because 
     $|\Theta|=3$; see \cite[Lem.~11.11]{BM17}. Since $P_f\sub \Theta$, such an isotopy lifts by $f$ to an isotopy relative to $f^{-1}(\Theta)\supset P$  from  $\id_{\CDach}$  to a homeomorphism   $\widetilde \phi$   (see \cite[Prop.~11.3]{BM17}). From the endpoints of these isotopies  we obtain a diagram as in  \eqref{eq:Thurston} with $f_\phi=f$. Moreover, here $\widetilde \phi$ is isotopic to $\id_{\CDach}$ rel.\ $P$ no matter what $\phi$ is. This shows that $\sigma_f([\phi])=[\id_{\CDach}]$ and thus 
 $\sigma_f$ is a constant map. In this case, the Finite Global Curve Attractor Conjecture is trivially true.    

We derive the following conclusion.

\begin{cor}\label{cor:marked-crit-points-generalized}
   Let  $f\colon (\CDach, P)\to (\CDach, P)$ be a rational Thurston map with a hyperbolic orbifold , $|P|=4$, and $C_f\subset P$. Then the pullback relation $\xleftarrow{f}$ on curves has a finite global attractor. 
\end{cor}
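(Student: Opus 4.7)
The proof splits naturally into two cases according to whether every point of $P$ is periodic under $f$. If $f(P) = P$, then Theorem~\ref{thm:inv-tiling-crit-points} applies and provides a $\sigma_f$-invariant cover of $\overline{\T_P}$; combining this with Theorem~\ref{thm-intro:FGA-tiling} immediately yields the desired finite global attractor, so nothing further is required in this case.

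In the remaining case, some point $p \in P$ is not periodic. Since $f(P) \subset P$ and $P$ is finite, such a $p$ cannot lie in $f(P)$. I would set $\Theta := P \setminus \{p\}$, which has cardinality~$3$, and observe that $f(P) \subset \Theta$. Combined with the hypothesis $C_f \subset P$, this forces $P_f \subset \Theta$ as well. The plan is then to show that $\sigma_f$ is a constant map, which renders the conclusion trivial: for any representative $\phi$ of a point in $\T_P$ normalized so that $\phi|_\Theta = \id_\Theta$, the fact that $|\Theta|=3$ gives $\id_{\CDach} \simeq \phi$ rel.\ $\Theta$ by \cite[Lem.~11.11]{BM17}. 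Because $P_f \subset \Theta$, this isotopy lifts through $f$ (using \cite[Prop.~11.3]{BM17}) to an isotopy rel.\ $f^{-1}(\Theta) \supset P$ from $\id_{\CDach}$ to the unique normalized lift $\widetilde\phi$ satisfying $\phi \circ f = f \circ \widetilde\phi$. Consequently $\widetilde\phi \simeq \id_{\CDach}$ rel.\ $P$, so $\sigma_f([\phi]) = [\widetilde\phi] = [\id_{\CDach}]$ for every $[\phi] \in \T_P$, and $\sigma_f$ collapses to a single point.

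The main delicate step is the second case, specifically the upgrade from an isotopy rel.\ $\Theta$ to an isotopy rel.\ the strictly larger set $P$. This is exactly where the hypothesis $C_f \subset P$ becomes indispensable: it forces $P_f \subset \Theta$, which is precisely the condition needed for the isotopy lifting lemma of \cite{BM17} to yield a lift of $\id_{\CDach}$ that agrees with $\widetilde\phi$ on all of $P$, not merely on $\Theta$. Once this constancy of $\sigma_f$ is verified, the two cases together establish the corollary.
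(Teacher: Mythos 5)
Your proof is correct and follows essentially the same two-case argument as the paper: Theorem~\ref{thm:inv-tiling-crit-points} combined with Theorem~\ref{thm-intro:FGA-tiling} when $f(P)=P$, and constancy of $\sigma_f$ (via the isotopy-lifting argument rel.\ $\Theta=P\setminus\{p\}$) otherwise. One small imprecision: an arbitrary non-periodic point of $P$ need not lie outside $f(P)$ (a strictly preperiodic point can still have a preimage in $P$); what you actually need, and what holds because $f|_P$ fails to be surjective in this case, is that \emph{some} $p\in P$ has no preimage in $P$, and it is for that $p$ that $f(P)\subset\Theta$.
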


\subsection{Rational Thurston maps with a moduli space map}\label{subsec:injective-X} 
In this subsection, we consider the special case when the map $X\colon \W_f\to \M_P$ in the diagram \eqref{eq:Wspace} is injective. In this case, we
have that the extension $X\colon \overline{\W_f}\to \overline{\M_P}$ is an injective holomorphic map between (closed) Riemann surfaces, and thus it must be a biholomorphism. It follows that we may consider the \emph{moduli space map} \[g_f:=Y\circ X^{-1}\colon \overline{\M_P}\to \overline{\M_P}.\]
Note that $g_f$ is a rational map on $\CDach$. Furthermore, since $X\colon \W_f\to \M_P$ is injective and $Y\colon \W_f\to \M_P$ is a covering map, we have the inclusions $P_{g_f}\subset \Theta$ and $g_f(\Theta)\subset \Theta$. Hence, $g_f\colon (\CDach, \Theta)\to (\CDach, \Theta)$ is a rational Thurston map.

We can now use existence results for invariant graphs (as provided by \cite[Thm.~1.1]{cui2022graphs} and \cite[Thm.~15.1]{BM17})  to conclude that, for sufficiently large $k\geq 1$, there is a finite connected planar embedded graph $G$ in $\CDach=\overline{\M_P}$ with $V(G)\supset \Theta$ and $g_f^{-k}(G) \supset G$. Since $\sigma_{f^k}=(\sigma_f)^k$, it easily follows that the graph $G$ is invariant under the extended moduli space correspondence for $f^k\colon (\CDach, P)\to (\CDach, P)$. Combining this with Lemma~\ref{lem:inv_tiling} and Theorem~\ref{thm-intro:FGA-tiling} we deduce that the Finite Global Curve Attractor Conjecture holds when $X$ is injective.

\begin{cor}\label{cor:injective-X}
    Suppose $f\colon (\CDach, P)\to (\CDach, P)$ is a rational Thurston map with $|P|=4$, hyperbolic orbifold, and injective $X$.  Then the pullback relation $\xleftarrow{f}$ on curves has a finite global attractor. 
\end{cor}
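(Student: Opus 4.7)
The plan is to reduce to Theorem~\ref{thm-intro:FGA-tiling} via Lemma~\ref{lem:inv_tiling}, by producing a finite connected planar embedded graph $G \subset \overline{\M_P} = \CDach$ with $V(G) \supset \Theta$ that is invariant under the extended moduli space correspondence of some iterate of $f$. The first step is to exploit the injectivity of $X$: since $X \colon \overline{\W_f} \to \overline{\M_P}$ is an injective holomorphic map between closed Riemann surfaces, it must be a biholomorphism, which lets me introduce the rational moduli space map $g_f := Y \circ X^{-1} \colon \CDach \to \CDach$. Using that $Y$ is a covering over $\M_P$ and that $X^{-1}$ sends $\Theta = \partial \M_P$ into $\partial \W_f$, I would check the inclusions $P_{g_f} \subset \Theta$ and $g_f(\Theta) \subset \Theta$, so that $g_f \colon (\CDach, \Theta) \to (\CDach, \Theta)$ is itself a rational Thurston map with three marked points. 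Under this identification, the invariance condition $X^{-1}(G) \subset Y^{-1}(G)$ of Lemma~\ref{lem:inv_tiling} translates into the combinatorial condition $G \subset g_f^{-1}(G)$.

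Next, I would invoke known existence results for invariant graphs of postcritically-finite rational maps. Since there may be no forward-invariant graph for $g_f$ itself, I would pass to an iterate: by \cite[Thm.~1.1]{cui2022graphs} (or \cite[Thm.~15.1]{BM17}), for every sufficiently large $k \geq 1$ there is a finite connected planar embedded graph $G \subset \CDach$ with $V(G) \supset \Theta$ and $g_f^{-k}(G) \supset G$. The observation tying this back to $f$ is $\sigma_{f^k} = (\sigma_f)^k$, which by functoriality of diagram~\eqref{eq:WspaceBar} implies that the extended moduli space correspondence for the rational Thurston map $f^k \colon (\CDach, P) \to (\CDach, P)$ is governed by $g_f^k$. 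Thus $G$ is invariant under this correspondence of $f^k$, and Lemma~\ref{lem:inv_tiling} then supplies a $\sigma_{f^k}$-invariant tessellation of $\overline{\T_P}$.

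Applying Theorem~\ref{thm-intro:FGA-tiling} to $f^k$ (which has the same marked set and the same hyperbolic orbifold as $f$) will yield a finite global attractor $\mathcal{A}$ for $\xleftarrow{f^k}$. To conclude for $f$ itself, I would observe that any orbit $[\gamma_0] \xleftarrow{f} [\gamma_1] \xleftarrow{f} \cdots$ breaks, for each residue class $i \in \{0, \dots, k-1\}$, into a $\xleftarrow{f^k}$-orbit $[\gamma_i] \xleftarrow{f^k} [\gamma_{i+k}] \xleftarrow{f^k} \cdots$ that eventually lies in $\mathcal{A}$. The full $\mu_f$-orbit therefore eventually lands in the finite set $\mathcal{A} \cup \mu_f(\mathcal{A}) \cup \cdots \cup \mu_f^{k-1}(\mathcal{A})$, giving a finite global attractor for $\xleftarrow{f}$.

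The main obstacle I anticipate is the invocation of the invariant graph theorem: this step needs $g_f$ (or an iterate) to fit into the combinatorial or expansion hypotheses of \cite{cui2022graphs} or \cite[Thm.~15.1]{BM17}, and verifying those hypotheses for the abstract map $g_f = Y \circ X^{-1}$ is the real work. A secondary technical point is the careful identification of the extended moduli space correspondence of $f^k$ with $g_f^k$ on $\overline{\M_P}$, which rests on $\sigma_{f^k} = (\sigma_f)^k$ together with the compatibility of $\omega_f$ and $\pi$ under iteration.
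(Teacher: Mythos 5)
Your proposal matches the paper's proof essentially step for step: injectivity of $X$ gives the biholomorphism and the moduli space map $g_f = Y\circ X^{-1}$, which is a rational Thurston map on $(\CDach,\Theta)$; the invariant-graph results of \cite{cui2022graphs} and \cite[Thm.~15.1]{BM17} supply a graph $G$ with $g_f^{-k}(G)\supset G$ for some iterate; and $\sigma_{f^k}=(\sigma_f)^k$ lets you apply Lemma~\ref{lem:inv_tiling} and Theorem~\ref{thm-intro:FGA-tiling} to $f^k$. Your final reduction from the attractor of $\xleftarrow{f^k}$ to one for $\xleftarrow{f}$ is a correct elaboration of a step the paper leaves implicit, and the caveat you raise about verifying the hypotheses of the invariant-graph theorems is the only point where the paper is content with a bare citation.
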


\subsection{Examples}\label{subsec:examples} 

We now discuss some families and explicit examples of rational Thurston maps that satisfy the assumptions of Theorem~\ref{thm:inv-tiling-crit-points} and Corollary~\ref{cor:injective-X}.

We start with the rational Thurston map $f\colon (\CDach,P)\to(\CDach,P)$ given by $f(z)=\frac{3z^2}{2z^3+1}$ with $P=P_f$. We note that this example has been studied in \cite{lodge2012boundary}, where it was shown that the pullback relation $\xleftarrow{f}$ on curves has a finite global attractor using algebraic techniques. A straightforward computation shows that $C_f=\{0,1,\omega,\bar\omega\}$, where $\omega:=-1/2+i\sqrt{3}/2$ is a cube root of unity, and that $f(0)=0$, $f(1)=1$, $f(\omega)=\bar \omega$, and $f(\bar \omega)=\omega$.  That is, $P_f=\{0,1,\omega,\bar\omega\}$, and $f$ has the following ramification portrait:

\[
\xymatrix{
    0\ar@(ru,rd)[]^{2:1}
  }
  \quad   
  \xymatrix{
    1\ar@(ru,rd)[]^{2:1}
  }
  \quad  
  \xymatrix{
    \omega \ar@(ru,lu)[r]^{2:1} & \bar\omega \ar@(ld,rd)[l]^{2:1}\rlap{.}
  }
\]
In particular, the map $f$ satisfies the assumptions of Theorem~\ref{thm:inv-tiling-crit-points}. However, in this case, we can show the existence of an invariant connected graph with respect to the extended moduli space correspondence directly, without relying on \cite{eremenko1999}.

Set $\Theta:=\{1,\omega,\bar\omega\}$. The respective correspondence \eqref{eq:mod_corr} for $f$ is explicitly computed in \cite[Sec.~4]{Buff2009Pullback}. Namely, it is shown that $\mathcal{W}_f=\CDach\setminus \{1,\omega,\bar\omega,-1,-\omega,-\bar\omega\}$, and that the holomorphic maps    $X,Y \colon \overline{\W_f}\to\overline{\M_P}$ are given by
\[X(\alpha)=\alpha^2 \quad \text{and} \quad Y(\alpha)= \frac{\alpha(\alpha^3 +2)}{2\alpha^3 + 1},\]
respectively. 

Note that we may view the unit circle $\{z\colon |z|=1\}$ as a connected planar embedded graph $G$ in $\overline{\M_P}=\CDach$ with $V(G)=\Theta$ and exactly three edges. Clearly, $X^{-1}(G)=G$. Furthermore, since $\frac{2\alpha^3 +1}{\alpha^3 + 2}$ is a finite Blaschke product, we have that $Y(G)\subset G$. It follows that the graph $G$ is invariant with respect to the extended moduli space correspondence \eqref{eq:mod_corr}, and thus the pullback map $\partial\sigma_f$ on $\overline{\partial\T_P}$ has a finite global attractor by Theorem~\ref{thm-intro:FGA-tiling}. To say precisely what the global attractor of $\partial\sigma_f$ is in $\overline{\partial \T_P}=\widehat{\Q}\cup \{\trivial\}$, we normalize $\T_P=\Halb$ and $\pi\colon \T_P\to \M_P$ so that the fixed point $\tau_f$ is in the open ideal hyperbolic triangle $\Omega \subset \Halb$ with vertices at $0,1,\infty$, and $\pi$ sends $\Omega$ onto the unit disc $\D\subset\M_P$ with $0,1,\infty$ mapping to $\bar{\omega}, 1, \omega$, respectively. Since $\D$ is an $\M$-tile and $\pi(\tau_f)=0\in \D$, we may conclude that the attractor is contained in the set $\NC:=\{0,\,1,\,\infty,\,\trivial\}\sub \widehat{\Q}\cup \{\trivial\}$. A straightforward computation shows that $1\mapsto 1$ and $0\mapsto \infty \mapsto 0$, and thus $\NC$ is the global attractor of $\partial\sigma_f$. 


Further examples of rational Thurston maps satisfying the assumptions of Theorem~\ref{thm:inv-tiling-crit-points}  are provided by critically fixed rational maps (with four critical points) studied in \cite{hlushchanka2019tischler}, as well as the complex conjugates of critically fixed anti-rational maps with real coefficients  (and four critical points) studied in \cite{geyer2020classification}. The former include the rational map $f(z)=\frac{3z^5+5z}{5z^4+3}$, for which the Hurwitz space $\W_f$ is a (punctured) torus.  

Let us now consider the \emph{rabbit polynomial}, which is the quadratic polynomial of the form $z^2+c$ whose unique finite critical point $0$ lies in a $3$-cycle and with $c\approx -0.12256+0.74486i$. Conjugate  the rabbit polynomial by an affine map sending $0,c,\infty$ to $0,1,\infty$, respectively. We get a rational Thurston map $f\colon (\CDach,P)\to  (\CDach,P)$ with $P:=P_f=\{0,1,x,\infty\}$. In fact, $f(z)=cz^2+1$ and $x\approx 0.87744 + 0.74486 i$.

Set $\Theta:=\{0,1,\infty\}$. Following \cite{koch2013teichmuller}, the respective Hurwitz space $\W_f=\CDach\setminus\{0,1,-1,\infty\}$ and the (extended) moduli space correspondence maps are given by 
\[X(\alpha)=\alpha \text{\quad and \quad} Y(\alpha)=1-\frac{1}{\alpha^2}.\]
Clearly, the rational Thurston map $f\colon (\CDach,P)\to  (\CDach,P)$ satisfies the assumptions of Corollary~\ref{cor:injective-X}. In particular, the moduli space map $g_f=Y\circ X^{-1}$ is given by $z\mapsto 1-\frac{1}{z^2}$. It is straightforward to check that $P_{g_f}=\{0,1,\infty\}$ and that the extended real line $G:=\R\cup \{\infty\}$ is forward-invariant under $g_f$. Hence, we may view $G$ as planar embedded graph in $\CDach$ with $V(G)=\Theta$, and the graph $G$ is invariant with respect to the extended moduli space correspondence. 

Let us normalize $\T_P=\Halb$ and $\pi\colon \T_P\to \M_P$ so that the fixed point $\tau_f$ is in the open ideal hyperbolic triangle $\Omega \subset \Halb$ with vertices at $0,1,\infty$, and $\pi$ sends $\Omega$ onto the upper half-plane $\Halb\subset\M_P$ with $0,1,\infty$ mapping to $0, 1, \infty$, respectively. Then $\pi(\tau_f)=x\in\Halb$, and we may conclude that the global attractor of $\partial\sigma_f$ is contained in  the set $\NC:=\{0,1,\infty,\trivial\}\sub  \widehat{\Q}\cup \{\trivial\}$. A straightforward computation shows that $1\mapsto \infty\mapsto 0\mapsto 1$, and thus $\NC$ is the global attractor of $\partial\sigma_f$. 


Further examples of rational Thurston maps satisfying the assumptions of Corollary~\ref{cor:injective-X} are provided by polynomial maps with periodic critical points and four (marked) postcritical points  \cite[Prop.~5.1]{koch2013teichmuller}, uncritical polynomial maps with four (marked) postcritical points  \cite[Prop.~5.3]{koch2013teichmuller}, and rational Thurston maps with three postcritical points and an extra marked fixed point studied in \cite{smith2024curve}; see also \cite{kelsey2019quadratic,prochorov2024finite}.

\bibliographystyle{alpha}
\bibliography{FGA}

\begin{thebibliography}{BLMW22}

\bibitem[BDP24]{bartholdi2024correspondences}
L.~Bartholdi, D.~Dudko, and K.M. Pilgrim.
\newblock Correspondences on {R}iemann surfaces and non-uniform hyperbolicity.
\newblock Preprint arXiv:2407.15548, 2024.

\bibitem[BEKP09]{Buff2009Pullback}
X.~Buff, A.~Epstein, S.~Koch, and K.M. Pilgrim.
\newblock On {T}hurston's pullback map.
\newblock In {\em Complex dynamics}, pages 561--583. A K Peters, Wellesley, MA,
  2009.

\bibitem[BHI24]{bonk2021eliminating}
M.~Bonk, M.~Hlushchanka, and A.~Iseli.
\newblock Eliminating {T}hurston obstructions and controlling dynamics on
  curves.
\newblock {\em Ergodic Theory Dynam. Systems}, 44(9):2454--2532, 2024.

\bibitem[BLMW22]{belk2022recognizing}
J.~Belk, J.~Lanier, D.~Margalit, and R.R. Winarski.
\newblock Recognizing topological polynomials by lifting trees.
\newblock {\em Duke Math. J.}, 171(17):3401--3480, 2022.

\bibitem[BM17]{BM17}
M.~Bonk and D.~Meyer.
\newblock {\em Expanding {T}hurston maps}, volume 225 of {\em Mathematical
  Surveys and Monographs}.
\newblock American Mathematical Society, Providence, RI, 2017.

\bibitem[CGZ22]{cui2022graphs}
G.~Cui, Y.~Gao, and J.~Zeng.
\newblock Invariant graphs of rational maps.
\newblock {\em Adv. Math.}, 404:Paper No. 108454, 2022.

\bibitem[DH93]{Douady1993}
A.~Douady and J.H. Hubbard.
\newblock A proof of {T}hurston's topological characterization of rational
  functions.
\newblock {\em Acta Math.}, 171(2):263--297, 1993.

\bibitem[EG02]{eremenko1999}
A.~Eremenko and A.~Gabrielov.
\newblock Rational functions with real critical points and the {B}. and {M}.
  {S}hapiro conjecture in real enumerative geometry.
\newblock {\em Ann. of Math. (2)}, 155(1):105--129, 2002.

\bibitem[GH24]{geyer2020classification}
L.~Geyer and M.~Hlushchanka.
\newblock Classification of critically fixed anti-{T}hurston maps.
\newblock Preprint arXiv:2006.10788, 2024.

\bibitem[Hlu19]{hlushchanka2019tischler}
M.~Hlushchanka.
\newblock Tischler graphs of critically fixed rational maps and their
  applications.
\newblock Preprint arXiv:1904.04759, 2019.

\bibitem[KL19]{kelsey2019quadratic}
G.~Kelsey and R.~Lodge.
\newblock Quadratic {T}hurston maps with few postcritical points.
\newblock {\em Geom. Dedicata}, 201:33--55, 2019.

\bibitem[Koc13]{koch2013teichmuller}
S.~Koch.
\newblock Teichm\"uller theory and critically finite endomorphisms.
\newblock {\em Adv. Math.}, 248:573--617, 2013.

\bibitem[KPS16]{koch2016pullback}
S.~Koch, K.M. Pilgrim, and N.~Selinger.
\newblock Pullback invariants of {T}hurston maps.
\newblock {\em Trans. Amer. Math. Soc.}, 368(7):4621--4655, 2016.

\bibitem[Lod13]{lodge2012boundary}
R.~Lodge.
\newblock Boundary values of the {T}hurston pullback map.
\newblock {\em Conform. Geom. Dyn.}, 17:77--118, 2013.

\bibitem[Pil12]{pilgrim2012algebraic}
K.M. Pilgrim.
\newblock An algebraic formulation of {T}hurston's characterization of rational
  functions.
\newblock {\em Ann. Fac. Sci. Toulouse Math. (6)}, 21(5):1033--1068, 2012.

\bibitem[Pro24]{prochorov2024finite}
N.~Prochorov.
\newblock Finite and infinite degree {T}hurston maps with a small postsingular
  set.
\newblock Preprint arXiv:2410.01146, 2024.

\bibitem[Sel12]{selinger2012thurston}
N.~Selinger.
\newblock Thurston's pullback map on the augmented {T}eichm\"{u}ller space and
  applications.
\newblock {\em Invent. Math.}, 189(1):111--142, 2012.

\bibitem[Smi24]{smith2024curve}
Z.~Smith.
\newblock Curve attractors for marked rational maps.
\newblock Preprint arXiv:22401.16636, 2024.

\end{thebibliography}
\end{document}